\documentclass[10pt,a4paper]{amsart}
\usepackage[utf8]{inputenc}
\usepackage[T1]{fontenc}
\usepackage[marginratio=1:1]{geometry} 
\geometry{left=2cm,right=2cm,top=2cm,bottom=2cm}
\usepackage[none]{hyphenat}
\usepackage{etoolbox}
\usepackage{lipsum}
\makeatletter
\newcommand{\addresseshere}{%
  \enddoc@text\let\enddoc@text\relax
}
\makeatother
\usepackage{rotating}
\usepackage{pdflscape}




\usepackage[left]{lineno}

\usepackage{etex}
\usepackage{listings}

\usepackage{amsmath, amsfonts, amsthm, amssymb, amsxtra, bbm, enumerate, stmaryrd}
\usepackage[UKenglish]{babel}
\usepackage[UKenglish]{isodate}
\usepackage[dvipsnames]{xcolor}

\usepackage{cite}
\usepackage{graphicx}
\usepackage[all, cmtip]{xy}
\xyoption{curve}

\usepackage{tikz}
\usetikzlibrary{arrows,calc,through,backgrounds,matrix,decorations.pathmorphing,positioning}

\usepackage{soul}
\usepackage{mathtools}

\usepackage{calligra}
\usepackage[mathcal]{euscript}
\DeclareMathAlphabet{\mathcalligra}{T1}{calligra}{m}{n}
\DeclareMathAlphabet{\mathpzc}{OT1}{pzc}{m}{it}

\usepackage{stackrel}
\usepackage{rotating}

\usepackage{wasysym}
\usepackage{paralist}
\usepackage{textcomp}
\usepackage[colorinlistoftodos]{todonotes}
\usepackage{caption}


\usepackage[bottom]{footmisc}
\usepackage{url}

\usepackage[colorlinks=false,
            linkcolor=RoyalBlue,
            anchorcolor=Periwinkle,
            citecolor=Orange,
            urlcolor=Emerald,
            bookmarks=false
            linkcolor=red,
            citecolor=blue,
]{hyperref}
\usepackage{hyperref}

\usepackage{mathrsfs} 

\calclayout
\makeatletter
\g@addto@macro{\thm@space@setup}{\thm@headfont{\bf}}
\makeatletter
\makeatother



\newtheorem{lem}{Lemma}[section]
\newtheorem{prop}[lem]{Proposition}

\newtheorem{thm}[lem]{Theorem}

\theoremstyle{remark}
\newtheorem{rem}[lem]{Remark}

\theoremstyle{definition}

\newtheorem{defn}[lem]{Definition}

\newtheorem{conv}[lem]{Convention}

\numberwithin{equation}{section}




%

\openup1.2\jot

\usepackage{dsfont}



\newcommand{\fK}{\mathscr{K}}






\DeclareMathOperator{\Hom}{Hom}

\DeclareMathOperator{\op}{op}

\DeclareMathOperator{\HH}{HH}

\DeclareMathOperator{\im}{im}

\DeclareMathOperator{\one}{\mathds{1}}

\DeclareMathOperator{\Ab}{\mathbf{Ab}}


\DeclareMathOperator{\MonObj}{\mathbf{MonObj}}
\DeclareMathOperator{\lModObj}{\mathbf{leftModObj}}
\DeclareMathOperator{\rModObj}{\mathbf{rightModObj}}
\DeclareMathOperator{\bModObj}{\mathbf{biModObj}}


\DeclareMathOperator{\ot}{\otimes}





\raggedbottom
\begin{document}
\sloppy

\title{Hochschild cohomology, monoid objects and monoidal categories}
\author{Magnus Hellstrøm-Finnsen}
\address{Magnus Hellstr{\o}m-Finnsen\\ Avdeling for ingeniørfag\\ Høgskolen i Østfold\\ Postboks 700\\ NO-1757 Halden\\ Norway.}
\email{mhellstroemfinnsen@gmail.com}
\thanks{}
\keywords{Monoidal categories, Hochschild cohomology.}
\subjclass[2010]{Primary 18D10; Secondary 18D20, 18G60, 16E40.} 

\maketitle

\begin{abstract}
This paper expands further on a category theoretical formulation of Hochschild cohomology for monoid objects in monoidal categories enriched over abelian groups, which has been studied in \cite{hel-18}. This topic was also presented at ISCRA, Isfahan, Iran, April 2019. The present paper aims to provide a more intuitive formulation of the Hochschild cochain complex and extend the definition to Hochschild cohomology with values in a bimodule object. In addition, an equivalent formulation of the Hochschild cochain complex in terms of a cosimplicial object in the category of abelian groups is provided. 
\end{abstract}

\tableofcontents

\section*{Introduction} %

Hochschild cohomology was initially studied by G.\ Hochschild in \cite{hoc-45} and \cite{hoc-46}, and provides a cohomology theory for associative algebras. In \cite{ger-63}, M.\ Gerstenhaber discovered that the cohomology ring has a rich structure, which later has been called a Gerstenhaber algebra. The rich structure provides interesting implications, not only restricted to mathematics, but also to physics and related fields. 

At Isfahan School \& Conference on Representations of Algebras (ISCRA), April 2019, I reported from \cite{hel-18}. This article gives a description of Hochschild cohomology in terms of monoid objects (``ring like objects'') in $\Ab$-enriched monoidal categories. Monoidal categories were independently discovered by J.\ Bénabou and S.\ Maclane in the beginning of the 1960's (see \cite{ben-63}, \cite{ben-64} and \cite{mac-63}), and they provided an axiomatic system to describe the categories with tensor product, like modules over a ring $R$ with tensor product over $R$ i.e.\ $\ot_R$, or abelian groups with the tensor product over the integers, i.e.\ $\ot_{\mathbb{Z}}$, etc.  

In this paper, we will first improve the construction of the Hochschild cochain complex given in \cite{hel-18}, by taking a more intuitive (and perhaps less combinatorial) approach to this complex. Thereafter, we approach Hochschild cohomology by a cosimplicial object in the category of abelian groups. We will discover that these formulations are equivalent. 

\section{Monoidal categories, monoid objects and module objects}%

First, we recall the definition of a monoidal category. 

\begin{defn}\label{def:nattensorcat}
A category $\fK$ is said to be {\em a monoidal category} if it is equipped with a bifunctor 
\begin{align*}
\ot : \fK \times \fK \to \fK,
\end{align*} 
called the {\em tensor product} or {\em monoidal product}, and an object $\one\in\fK$, called the {\em tensor unit} or {\em monoidal unit}, together with three natural isomorphisms: 
\begin{itemize}
\item The \emph{associator}, $\alpha:( ? \ot ? ) \ot ? \rightharpoonup ? \ot ( ? \ot ? )$, which has components:  
\begin{align*} 
\alpha_{X,Y,Z}:(X\ot Y)\ot Z \to X\ot (Y\ot Z),
\end{align*}
for all objects $X$, $Y$ and $Z$ in $\fK$.
\item The \emph{left unitor}, $\lambda: \one \ot ? \rightharpoonup ? $,  which has components:  
\begin{align*}
\lambda_X: \one \ot X \to X, 
\end{align*}
for every object $X$ in $\fK$. 
\item The \emph{right unitor}, $\rho: ? \ot \one \rightharpoonup ? $, which has components:  
\begin{align*}
\rho_X: X \ot \one \to X,
\end{align*}
for every object $X$ in $\fK$. 
\end{itemize} 
Such that the {\em pentagon diagram}:  
\begin{center}
\begin{tikzpicture}
\matrix(m)[matrix of math nodes,row sep=5em,column sep=-2em,text height=1.5ex,text depth=0.25ex]
{                                  &                                  &(W\ot X) \ot (Y \ot Z)&                                  &                                  \\
 ((W\ot X) \ot Y) \ot Z&                                  &                                  &                                  &W\ot (X \ot (Y \ot Z)),\\
                                   &(W\ot (X \ot Y)) \ot Z&                                  &W\ot ((X \ot Y) \ot Z)&                                  \\};
\draw[ ->,font=\scriptsize](m-2-1) edge         node[above,sloped]{$\alpha_{W \ot X,Y,Z}          $} (m-1-3);
\draw[ ->,font=\scriptsize](m-1-3) edge         node[above,sloped]{$\alpha_{W,X,Y \ot Z}          $} (m-2-5);

\draw[ ->,font=\scriptsize](m-2-1) edge         node[left ]{$\alpha_{W,X,Y}\ot 1_Z         $} (m-3-2);
\draw[ ->,font=\scriptsize](m-3-2) edge         node[below]{$\alpha_{W,X\ot Y,Z}           $} (m-3-4);
\draw[ ->,font=\scriptsize](m-3-4) edge         node[right]{$1_W\ot\alpha_{X,Y,Z}          $} (m-2-5);
\end{tikzpicture}
\end{center}
where $W$, $X$, $Y$ and $Z$ are arbitrary objects in $\fK$, and the \emph{triangle diagram}:
\begin{center}
\begin{tikzpicture}
\matrix(m)[matrix of math nodes,row sep=3em,column sep=2em,text height=1.5ex,text depth=0.25ex]
{ (X \ot \one) \ot Y  &             & X \ot (\one \ot Y) \\
                           & X \ot Y, &                       \\};
\draw[ ->,font=\scriptsize](m-1-1) edge         node[above]{$ \alpha_{X,I,Y}        $} (m-1-3);                       
\draw[ ->,font=\scriptsize](m-1-1) edge         node[left ]{$ \rho_X \ot 1_Y    $} (m-2-2);                       
\draw[ ->,font=\scriptsize](m-1-3) edge         node[right]{$ 1_X \ot \lambda_Y $} (m-2-2);
\end{tikzpicture} 
\end{center}
where $X$ and $Y$ are arbitrary objects in $\fK$, commute. This category is denoted $(\fK,\ot,\one,\alpha,\lambda,\rho)$. 
\end{defn}

\begin{rem}\label{rem:coh}
We recall some facts about natural coherence in monoidal categories from \cite[Section VII.2]{mac-98}. This was originally proposed by G.\ M.\ Kelly and S.\ MacLane (see \cite{mac-63} and \cite{kel-64}). Natural coherence asserts that every formal diagram involving instances (of compositions) of the natural isomorphisms ($\alpha$, $\lambda$ and $\rho$, possibly tensored with suitable identities) commutes. MacLane's coherence theorem in \cite{mac-63} can equivalently be stated as: {\em Each monoidal category is monoidally equivalent to a strict one}. A monoidal category is {\em strict} whenever the natural isomorphisms $\alpha$, $\lambda$ and $\rho$ are identities. For a strict monoidal category, the construction of the Hochschild cochain complex (in Definition~\ref{def:hoch}) is immediate. The originality of the present paper is not to rely on this theorem. How a monoidal category can be turned into a strict one is elaborated upon 
in \cite{sch-01}. 
\end{rem}

The first formulation of natural coherence above (in Remark~\ref{rem:coh}) will be important in the formulation of the Hochschild cochain complex in Section~\ref{sec:tuphoch}. In particular, it is important in the proof that this construction actually is a cochain complex (see \cite[Theorem~3.2]{hel-18}). 

The next objective is to establish the notion of ``ring like objects'' in a monoidal category, that will be utilized in this paper. Furthermore, we will also establish an appropriate notion of arrows between these objects. 

\begin{defn}\label{def:monoid} 
Let $(\fK,\ot,\one,\alpha,\lambda,\rho)$ be a monoidal category. A \emph{monoid object} is an object $M$ in $\fK$ equipped with a \emph{multiplication rule} $\mu_M:M \ot M \to M$ and a \emph{multiplicative unit} $\eta_M:\one \to M$. These morphisms satisfy the following relations: 
\begin{itemize}
\item The \emph{associative relation}: the multiplication rule is associative in the sense that the following diagram commutes:  
\begin{center}
\begin{tikzpicture}
\matrix(m)[matrix of math nodes,row sep=2.6em,column sep=2.8em,text height=1.5ex,text depth=0.25ex]
{(M \ot M)\ot M  &               & M \ot(M \ot M)  \\
  M           \ot M  &               & M \ot M             \\
                         &  M.            &                         \\};
\draw[ ->,font=\scriptsize](m-1-1) edge         node[above]{$\alpha_{M,M,M}          $} (m-1-3);
\draw[ ->,font=\scriptsize](m-1-1) edge         node[left ]{$\mu_M \ot 1_R           $} (m-2-1);
\draw[ ->,font=\scriptsize](m-2-1) edge         node[below]{$\mu_M                   $} (m-3-2);

\draw[ ->,font=\scriptsize](m-1-3) edge         node[right]{$1_M   \ot \mu_M         $} (m-2-3);
\draw[ ->,font=\scriptsize](m-2-3) edge         node[below]{$\mu_M                   $} (m-3-2);
\end{tikzpicture}
\end{center}
\item The \emph{unitarity relation}: the multiplication rule admits a left unit and a right unit in the sense that the following diagram commutes:  
\begin{center}
\begin{tikzpicture}
\matrix(m)[matrix of math nodes,row sep=2.6em,column sep=2.8em,text height=1.5ex,text depth=0.25ex]
{ M                      & \one \ot M   & M \ot M        & M \ot \one  & M     \\
                         &               & M.                  &                      \\
};
\draw[ ->,font=\scriptsize](m-1-1) edge         node[above]{$\lambda^{-1}_M          $} (m-1-2);
\draw[ ->,font=\scriptsize](m-1-2) edge         node[above]{$\eta_M\ot 1_M              $} (m-1-3);
\draw[ ->,font=\scriptsize](m-1-3) edge         node[right]{$\mu_M                   $} (m-2-3);
\draw[ ->,font=\scriptsize](m-1-1) edge         node[below]{$1_M                     $} (m-2-3);

\draw[ ->,font=\scriptsize](m-1-5) edge         node[above]{$\rho^{-1}_M             $} (m-1-4);
\draw[ ->,font=\scriptsize](m-1-4) edge         node[above]{$1_M\ot \eta_M              $} (m-1-3);
\draw[ ->,font=\scriptsize](m-1-5) edge         node[below]{$1_M                     $} (m-2-3);
\end{tikzpicture}
\end{center}
\end{itemize}
We denote a monoid object as a triple $(M,\mu_M,\eta_M)$, and often the subscripts are skipped when the monoid object is not changed. 
\end{defn} 

The next objective is to define an appropriate notion of arrows between monoid objects in a monoidal category $\fK$, and observe that monoid objects and arrows of monoid objects form a subcategory of $\fK$. 

\begin{defn}
Let $\fK$ be a monoidal category, and let $(M,\mu_M,\eta_M)$ and $(N,\mu_n,\eta_N)$ be two monoid objects in $\fK$. A {\em morphism of monoid objects} $f:(M,\mu_M,\eta_M) \to (N,\mu_N,\eta_N)$ is a morphism in $\fK$ such that multiplication is preserved, in the sense that the following diagram commutes:  
\begin{center}
\begin{tikzpicture}
\matrix(m)[matrix of math nodes,row sep=2.6em,column sep=2.8em,text height=1.5ex,text depth=0.25ex]
{
M \ot M & N \ot N \\
M       & N,      \\ 
};
\draw[ ->,font=\scriptsize](m-1-1) edge         node[above]{$ f \ot f $} (m-1-2);
\draw[ ->,font=\scriptsize](m-2-1) edge         node[above]{$ f       $} (m-2-2);
\draw[ ->,font=\scriptsize](m-1-1) edge         node[left ]{$ \mu_M    $} (m-2-1);
\draw[ ->,font=\scriptsize](m-1-2) edge         node[right]{$ \mu_N    $} (m-2-2);
\end{tikzpicture}
\end{center}
and the units are preserved, in the sense that the following diagram commutes: 
\begin{center}
\begin{tikzpicture}
\matrix(m)[matrix of math nodes,row sep=2.6em,column sep=2.8em,text height=1.5ex,text depth=0.25ex]
{
  & \one &    \\
M &      & N.  \\
};
\draw[ ->,font=\scriptsize](m-1-2) edge         node[left ]{$ \eta_M $} (m-2-1);
\draw[ ->,font=\scriptsize](m-1-2) edge         node[right]{$ \eta_N $} (m-2-3);
\draw[ ->,font=\scriptsize](m-2-1) edge         node[above]{$ f   $} (m-2-3);
\end{tikzpicture}
\end{center}
Monoid objects and morphisms of monoid objects form a category, denote this category by $\MonObj(\fK)$.  
\end{defn}

Over a ``ring-like'' object $M$ in $\fK$, there are notions of left, right and bi ``module-like'' objects in $\fK$. The next objective is to define these. 

\begin{defn}
Let $(\fK,\ot,\one,\alpha,\lambda,\rho)$ be a monoidal category and let $(M,\mu,\eta)$ be a monoid object in $\fK$. A {\em right module object over $M$} is an object $X$ in $\fK$ equipped with a {\em right action on $X$ from $M$}, which is a morphism $\omega:X \ot M \to X $ in $\fK$, that makes the following two diagrams commute: 
\begin{center}
\begin{tikzpicture}
\matrix(m)[matrix of math nodes,row sep=2.6em,column sep=2.8em,text height=1.5ex,text depth=0.25ex]
{
 (X \ot M)\ot M  &               & X \ot(M \ot M)  \\
  X       \ot M  &               & X \ot M         \\
                 &  X            &                 \\
};
\draw[ ->,font=\scriptsize](m-1-1) edge         node[above]{$\alpha_{X,M,M}          $} (m-1-3);
\draw[ ->,font=\scriptsize](m-1-1) edge         node[left ]{$\omega \ot 1_M$} (m-2-1);
\draw[ ->,font=\scriptsize](m-2-1) edge         node[below]{$\omega        $} (m-3-2);
\draw[ ->,font=\scriptsize](m-1-3) edge         node[right]{$1_X \ot \mu   $} (m-2-3);
\draw[ ->,font=\scriptsize](m-2-3) edge         node[below]{$\omega        $} (m-3-2);
\end{tikzpicture}
\end{center}
and
\begin{center}
\begin{tikzpicture}
\matrix(m)[matrix of math nodes,row sep=2.6em,column sep=2.8em,text height=1.5ex,text depth=0.25ex]
{ X \ot \one     &                & X \ot M  \\
                 &  X.            &          \\};
\draw[ ->,font=\scriptsize](m-1-1) edge         node[above]{$1_X \ot \eta $} (m-1-3);
\draw[ ->,font=\scriptsize](m-1-1) edge         node[left ]{$\rho_X    $} (m-2-2);
\draw[ ->,font=\scriptsize](m-1-3) edge         node[right]{$\omega       $} (m-2-2);
\end{tikzpicture}
\end{center}
This right module object is denoted by $(X_M,\omega)$. Dually, a {\em left module object over $M$} is an object $Y$ in $\fK$ equipped with a {\em left action from $M$ on $Y$}, $\nu:M \ot Y \to Y $, such that the dual axioms of those for a right module object are satisfied, i.e.\ such that 
\begin{center}
\begin{tikzpicture}
\matrix(m)[matrix of math nodes,row sep=2.6em,column sep=2.8em,text height=1.5ex,text depth=0.25ex]
{
 (M \ot M)\ot Y  &     & M \ot(M \ot Y) \\
  M       \ot Y  &     & M \ot Y        \\
                 &  Y  &                \\
};
\draw[ ->,font=\scriptsize](m-1-1) edge         node[above]{$\alpha_{M,M,Y}$} (m-1-3);
\draw[ ->,font=\scriptsize](m-1-1) edge         node[left ]{$\mu \ot 1_Y   $} (m-2-1);
\draw[ ->,font=\scriptsize](m-2-1) edge         node[below]{$\nu           $} (m-3-2);
\draw[ ->,font=\scriptsize](m-1-3) edge         node[right]{$1_M \ot \nu   $} (m-2-3);
\draw[ ->,font=\scriptsize](m-2-3) edge         node[below]{$\nu           $} (m-3-2);
\end{tikzpicture}
\end{center}
and
\begin{center}
\begin{tikzpicture}
\matrix(m)[matrix of math nodes,row sep=2.6em,column sep=2.8em,text height=1.5ex,text depth=0.25ex]
{ \one \ot Y     &               & M \ot Y  \\
                 &  Y            &          \\};
\draw[ ->,font=\scriptsize](m-1-1) edge         node[above]{$ \eta \ot 1_Y $} (m-1-3);
\draw[ ->,font=\scriptsize](m-1-1) edge         node[left ]{$\lambda_Y  $} (m-2-2);
\draw[ ->,font=\scriptsize](m-1-3) edge         node[right]{$\nu$} (m-2-2);
\end{tikzpicture}
\end{center}
commute. This left module object is denoted by $({_MY},\nu)$. 
\end{defn}

\begin{defn}\label{def:modmor}
Let $(X,\omega_X)$ and $(Y,\omega_Y)$ be two right module objects over the same monoid object $M$ in a monoidal category $\fK$. A {\em morphism of right module objects over $M$}, $f: (X,\omega_X) \to (Y,\omega_Y)$, is a morphism $f:X \to Y$ in $\fK$ preserving the right action, i.e.\ the morphism respects the right module object structure in the sense that the following diagram commutes:  
\begin{center}
\begin{tikzpicture}
\matrix(m)[matrix of math nodes,row sep=2.6em,column sep=2.8em,text height=1.5ex,text depth=0.25ex]
{
 X \ot M  &               & Y \ot M     \\
 X        &               & Y.           \\
};
\draw[ ->,font=\scriptsize](m-1-1) edge         node[above]{$f \ot 1_M $} (m-1-3);
\draw[ ->,font=\scriptsize](m-1-3) edge         node[right]{$ \omega_Y $} (m-2-3);
\draw[ ->,font=\scriptsize](m-1-1) edge         node[left ]{$ \omega_X $} (m-2-1);
\draw[ ->,font=\scriptsize](m-2-1) edge         node[above]{$f         $} (m-2-3);
\end{tikzpicture}
\end{center}
Right module objects over $M$ form a category. We denote this category by $\mathbf{rightModObj}_{\fK}(M)$, or simply $\mathbf{rightModObj}(M)$. 

A {\em morphism of left module objects over $M$}, $f:(X,\nu_X)\to(Y,\nu_Y)$, is defined similarly as a morphism of right module objects over $M$, that is, $f:X\to Y$ is a morphism in $\fK$, that preserves the left action, in the sense that the following diagram commutes:  
\begin{center}
\begin{tikzpicture}
\matrix(m)[matrix of math nodes,row sep=2.6em,column sep=2.8em,text height=1.5ex,text depth=0.25ex]
{
 M \ot X  &               & M \ot Y     \\
 X        &               & Y.           \\
};
\draw[ ->,font=\scriptsize](m-1-1) edge         node[above]{$1_M \ot f $} (m-1-3);
\draw[ ->,font=\scriptsize](m-1-3) edge         node[right]{$ \nu_Y    $} (m-2-3);
\draw[ ->,font=\scriptsize](m-1-1) edge         node[left ]{$ \nu_X    $} (m-2-1);
\draw[ ->,font=\scriptsize](m-2-1) edge         node[above]{$f         $} (m-2-3);
\end{tikzpicture}
\end{center}
Left module objects over $M$ form a category. We denote this category by $\mathbf{leftModObj}_{\fK}(M)$, or simply $\mathbf{leftModObj}(M)$. 
\end{defn}

%
\begin{defn}\label{def:bimod}
Let $(\fK,\ot,\one,\alpha,\lambda,\rho)$ be a monoidal category, and let $(M,\mu_M,\eta_M)$ and $(N,\mu_N,\eta_N)$ be two monoid objects in $\fK$. An {\em $N$-$M$-bimodule object} is an object $X$ in $\fK$ which is:  
\begin{itemize}
\item A right module object, say $(X,\omega)$, over $M$. 
\item A left module object, say $(X,\nu)$, over $N$. 
\end{itemize}
Such that (in addition to the diagrams in Definition~\ref{def:modmor}) the following diagram commutes:  
\begin{center}
\begin{tikzpicture}
\matrix(m)[matrix of math nodes,row sep=2.6em,column sep=2.8em,text height=1.5ex,text depth=0.25ex]
{
 (N \ot X)\ot M  &     & N \ot(X \ot M) \\
  X       \ot M  &     & N \ot X        \\
                 &  X. &                \\
};
\draw[ ->,font=\scriptsize](m-1-1) edge         node[above]{$\alpha_{N,X,M}$} (m-1-3);
\draw[ ->,font=\scriptsize](m-1-1) edge         node[left ]{$\nu \ot 1_M   $} (m-2-1);
\draw[ ->,font=\scriptsize](m-2-1) edge         node[below]{$\omega        $} (m-3-2);
\draw[ ->,font=\scriptsize](m-1-3) edge         node[right]{$1_N \ot \omega   $} (m-2-3);
\draw[ ->,font=\scriptsize](m-2-3) edge         node[below]{$\nu           $} (m-3-2);
\end{tikzpicture}
\end{center}
We often denote bimodule objects by a triple: 
\begin{align*} 
({_NX_M},\nu^N_X,\omega^M_X)=(X,\nu,\omega).
\end{align*} 
When $N=M$, we simply say that $(X,\nu,\omega)$ is an {\em $M$-bimodule object}. 
\end{defn}

In the classical case of $S$-$R$-bimodules over two rings $R$ and $S$, a morphism of $S$-$R$-bimodules is simply a morphism $S$-$R$-modules. Next, we observe that a similar result holds for bimodule objects in a monoidal category, i.e.\ a morphism of left and right module objects respects the bimodule object structure. 
%

\begin{prop}\label{prop:bimor}
Let $(\fK,\ot,\one,\alpha,\lambda,\rho)$ be a monoidal category, let $(M,\mu_M,\eta_M)$ and $(N,\mu_N,\eta_N)$ be two monoid objects in $\fK$, and let $({_NX_M},\nu^N_X,\omega^M_X)=(X,\nu_X,\omega_X)$ and $({_NY_M},\nu^N_Y,\omega^M_Y)=(Y,\nu_Y,\omega_Y)$ be $N$-$M$-bimodule objects. A morphism of $N$-$M$-module objects  
\begin{align*}
f:(X,\nu_X,\omega_X) \to (Y,\nu_Y,\omega_Y) 
\end{align*}
(i.e.\ $f$ respects the left action and the right action) does also respect the bimodule object structure. 
\end{prop}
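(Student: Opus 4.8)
The plan is to show directly that the bimodule-compatibility diagram for $(Y,\nu_Y,\omega_Y)$ is automatically compatible with $f$, given that $f$ already respects the left action $\nu$ and the right action $\omega$ individually. In other words, I must check that the diagram defining an $N$-$M$-bimodule object morphism — the one asserting that $f$ intertwines the ``mixed'' actions $(N\ot X)\ot M \to X$ and $(N\ot Y)\ot M \to Y$ — commutes. But in fact the content of Proposition~\ref{prop:bimor} is that there is \emph{nothing extra to check}: once $f$ respects $\nu$ and $\omega$ separately, it respects everything. So really I just need to observe that the $N$-$M$-bimodule object structure on $X$ (resp.\ $Y$) is \emph{determined} by its left $N$-module structure and its right $M$-module structure together with the one compatibility square in Definition~\ref{def:bimod}, and that the morphism axioms for bimodule objects are precisely the conjunction of the morphism axioms for left module objects and for right module objects — no separate ``mixed'' condition is imposed.

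Concretely, I would proceed as follows. First, recall from Definition~\ref{def:modmor} that a morphism of left module objects over $N$ is a morphism $f\colon X\to Y$ in $\fK$ making the square with $1_N\ot f$, $\nu_X$, $\nu_Y$, $f$ commute, and a morphism of right module objects over $M$ is a morphism making the square with $f\ot 1_M$, $\omega_X$, $\omega_Y$, $f$ commute. Second, observe that Definition~\ref{def:bimod} imposes on the \emph{object} $X$ the extra pentagon-shaped compatibility diagram involving $\alpha_{N,X,M}$, $\nu\ot 1_M$, $1_N\ot\omega$, $\nu$, and $\omega$, but when it comes to \emph{morphisms} of such objects, the only requirement (``$f$ respects the left action and the right action'') is the two squares just mentioned. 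Hence a morphism of $N$-$M$-bimodule objects is, by definition, exactly a morphism that is simultaneously a morphism of left $N$-module objects and of right $M$-module objects. The proposition then follows tautologically: the hypothesis is that $f$ respects $\nu$ and $\omega$, and that is the full definition of respecting the bimodule object structure.

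If instead the intended reading is that there \emph{is} a separate mixed square to verify — say the square with horizontal arrows built from $\alpha$ and the actions, top $(N\ot X)\ot M$, going to $(N\ot Y)\ot M$ via $1_N\ot f\ot 1_M$ (suitably bracketed), and vertical arrows the iterated actions $X\ot M\to X$ precomposed appropriately — then I would verify commutativity by pasting. The key steps would be: apply the right-module morphism square to the $\omega$-leg, apply the left-module morphism square to the $\nu$-leg, and use naturality of $\alpha$ with respect to the morphism $f$ (i.e.\ $\alpha_{N,Y,M}\circ((1_N\ot f)\ot 1_M)=(1_N\ot(f\ot 1_M))\circ\alpha_{N,X,M}$) to move $f$ past the associator. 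Paste these three commuting cells around the bimodule compatibility square of $X$ (or of $Y$) and the outer square closes up.

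The main obstacle is simply pinning down which diagram the proposition is really asking us to check; the categorical content is minimal, since module-object morphism conditions are ``linear'' in the two actions and the bimodule compatibility axiom is a condition on objects, not on morphisms. Once that is made precise, the proof is either immediate from the definitions or a one-line diagram paste using naturality of $\alpha$; there is no genuine difficulty, and in particular no coherence subtlety arises because no new instances of $\alpha$, $\lambda$, $\rho$ are introduced beyond the single naturality square for $\alpha$ applied to $f$.
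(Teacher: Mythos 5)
Your proposal is correct and, in its fallback reading, is essentially the paper's own argument: the paper proves the proposition by pasting exactly the cells you describe — the naturality square for $\alpha$ applied to $f$, the two module-morphism squares (the front and back hexagons), and the bimodule compatibility diagrams of $X$ and $Y$ on the two sides. Your preliminary observation that no genuinely new condition on morphisms is being imposed also matches how the paper uses the result (it concludes that no further assumptions on morphisms of bimodule objects are needed beyond respecting the two actions).
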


\begin{proof}
Consider the following diagram:   
\begin{center}
\begin{tikzpicture}
\matrix(m)[matrix of math nodes,row sep=2.6em,column sep=2.8em,text height=1.5ex,text depth=0.25ex]
{
 (N \ot X)\ot M  &                && (N \ot Y)\ot M &                \\
                 & N \ot(X \ot M) &&                & N \ot(Y \ot M) \\
  X       \ot M  &                &&  Y       \ot M &                \\
                 & N \ot X        &&                & N \ot Y        \\
  X              &                &&  Y             &                \\
                 & X              &&                &       Y.       \\
};
\draw[ ->,font=\scriptsize](m-1-1) edge[sloped]     node[above]{$\alpha_{N,X,M}   $} (m-2-2);
\draw[ ->,font=\scriptsize](m-1-1) edge[near end]   node[left ]{$\nu_X \ot 1_M    $} (m-3-1);
\draw[ ->,font=\scriptsize](m-3-1) edge[near end]   node[left ]{$\omega_X         $} (m-5-1);
\draw[ ->,font=\scriptsize](m-2-2) edge[near end]   node[left ]{$1_N \ot \omega_X $} (m-4-2);
\path[ = ,font=\scriptsize](m-5-1) edge[-,double]   node[above]{$                 $} (m-6-2);
\draw[ ->,font=\scriptsize](m-1-4) edge[sloped]     node[above]{$\alpha_{N,Y,M}   $} (m-2-5);
\draw[ ->,font=\scriptsize](m-1-4) edge[near end]   node[left ]{$\nu_Y \ot 1_M    $} (m-3-4);
\draw[ ->,font=\scriptsize](m-3-4) edge[near end]   node[left ]{$\omega_Y         $} (m-5-4);
\draw[ ->,font=\scriptsize](m-2-5) edge[near end]   node[left ]{$1_N \ot \omega_Y $} (m-4-5);
\draw[ ->,font=\scriptsize](m-4-5) edge[near end]   node[left ]{$\nu_Y            $} (m-6-5);
\path[->, font=\scriptsize](m-5-4) edge[-,double]   node[above]{$                 $} (m-6-5);
\draw[ ->,font=\scriptsize](m-1-1) edge[near start] node[above]{$(1_N  \ot f)\ot 1_M $} (m-1-4);
\draw[ ->,font=\scriptsize](m-5-1) edge[near end]   node[above]{$f$} (m-5-4);
\draw[ ->,font=\scriptsize](m-6-2) edge[near end]   node[above]{$f$} (m-6-5);
\draw[ - ,line width=2.5mm, white](m-4-2) edge[near end]   node[left ]{$            $} (m-6-2);
\draw[ ->,font=\scriptsize](m-4-2) edge[near end]   node[left ]{$\nu_X            $} (m-6-2);
\draw[ - ,line width=2.5mm, white](m-2-2) edge[near start] node[above]{$ $} (m-2-5);
\draw[ ->,font=\scriptsize](m-2-2) edge[near start] node[above]{$ 1_N  \ot(f \ot 1_M)$} (m-2-5);
\end{tikzpicture}
\end{center}
The top square commutes since $\alpha$ is a natural transformation. The back part of the diagram (the back ``hexagon'') commutes since $f$ is a morphism of $N$-$M$ module objects. The front part/hexagon commutes by the same reason. The left side of the diagram commutes since $X$ is an $N$-$M$-bimodule. Similarly, the right part commutes since $Y$ is an $N$-$M$-bimodule. The bottom square commutes by composition with identities. Hence, the diagram commutes, which proves the proposition. 
%
%
\end{proof}

A consequence of Proposition~\ref{prop:bimor} is that no further assumptions on the morphisms of bimodule objects are needed, than those already given by the module objects. The definition of the category of bimodule objects follows next. 

\begin{defn}
Let $(\fK,\ot,\one,\alpha,\lambda,\rho)$ be a monoidal category and let $(M,\mu_M,\eta_M)$ be a monoid object in $\fK$. The full subcategory of $\lModObj(M)$ and $\rModObj(M)$ generated by bimodule objects over $M$ is called the {\em category of bimodule objects over $M$}, and it is denoted by $\bModObj(M)$.   
\end{defn}

\section{Tuples of a monoid object, the Hochschild cochain complex and Hochschild cohomology}\label{sec:tuphoch} 

In this section our monoidal category $\fK$ will be $\Ab$-enriched, which we define next. 

\begin{defn}
An arbitrary category $\fK$ is said to be {\em $\Ab$-enriched} if the hom-sets are abelian groups and the composition is bilinear over the integers. 
\end{defn}

This means that $\fK$ is enriched over the category of abelian groups. However, it is not assumed that $\fK$ is additive, since we do not assume that our category has finite biproducts.  

We recall the basic definitions of a cochain complex and cohomology. 

\begin{defn}
Let $\fK$ be an $\Ab$-enriched category (or an other category with a zero object). A ($\mathbb{Z}$-{\em graded}) {\em cochain complex} in $\fK$ is a sequence of objects and arrows 
\begin{align*}
(C^{\bullet},d^{\bullet}):\dots \xrightarrow[]{d^{-2}} C^{-1} \xrightarrow[]{d^{-1}} C^0 \xrightarrow[]{d^{0}} C^1 \xrightarrow[]{d^{1}} \cdots, 
\end{align*} 
such that two adjacent arrows compose to zero, $d^{k} \circ d^{k-1} = 0$ for all $k \in \mathbb{Z}$. A {\em morphism of chain complexes} 
\begin{align*}
f: (A^{\bullet},d_A^{\bullet}) \to (B^{\bullet},d_B^{\bullet})
\end{align*}
is a degree wise collection of morphisms in $\fK$, $f^k:A^k \to B^k$ for all $k\in\mathbb{Z}$, such that all squares in the following diagram commute:  
\begin{center}
\begin{tikzpicture}
\matrix(m)[matrix of math nodes,row sep=1.5em,column sep=2.2em,text height=1.5ex,text depth=0.25ex]
{
    & \cdots & A^{k-1} & A^{k} & A^{k+1} & \cdots  \\
    & \cdots & B^{k-1} & B^{k} & B^{k+1} & \cdots. \\ 
};
\draw[ ->,font=\scriptsize](m-1-2) edge         node[above]{$          $}(m-1-3);
\draw[ ->,font=\scriptsize](m-1-3) edge         node[above]{$d^{k-1}_A $}(m-1-4);
\draw[ ->,font=\scriptsize](m-1-4) edge         node[above]{$d^k_A     $}(m-1-5);
\draw[ ->,font=\scriptsize](m-1-5) edge         node[above]{$          $}(m-1-6);
\draw[ ->,font=\scriptsize](m-2-2) edge         node[above]{$          $}(m-2-3);
\draw[ ->,font=\scriptsize](m-2-3) edge         node[above]{$d^{k-1}_B $}(m-2-4);
\draw[ ->,font=\scriptsize](m-2-4) edge         node[above]{$d^k_B     $}(m-2-5);
\draw[ ->,font=\scriptsize](m-2-5) edge         node[above]{$          $}(m-2-6);
\draw[ ->,font=\scriptsize](m-1-3) edge         node[right]{$f^{k-1}   $}(m-2-3);
\draw[ ->,font=\scriptsize](m-1-4) edge         node[right]{$f^{k}     $}(m-2-4);
\draw[ ->,font=\scriptsize](m-1-5) edge         node[right]{$f^{k+1}    $}(m-2-5);
%
\end{tikzpicture}
\end{center}
The category of cochain complexes over $\fK$ is denoted by $\mathbf{coCh}(\fK)$. 
\end{defn} 

Note, with this definition, a chain complex is a ``cochain complex'' where the $\mathbb{Z}$-grading is reversed, or equivalently, a cochain complex in $\fK^{\op}$. 

\begin{defn}
Let $\fK$ be an abelian category. The {\em $k$-th cohomology group} of the cochain complex $(C^{\bullet},d^{\bullet})$ is defined to be 
\begin{align*}
\ker(d^{k}) / \im(d^{k-1}). 
\end{align*}
\end{defn}


The next objective is to define the Hochschild cochain complex for a monoid object $M$ in an $\Ab$-enriched monoidal category with values in a bimodule object $X$ over $M$. As stated in the Introduction, we will use a slightly different 
method, than that used in the construction of this cochain complex given in \cite{hel-18}. In particular, we will deal with the associators slightly differently, and perhaps more intuitively, in order to get a less complicated description of the differentials. 
The method of construction of the cochain complex in the present paper is more motivated by the basic ideas behind Hochschild cohomology, while the operations used in \cite{hel-18} were more ``tensor combinatorially'' motivated. Hence, we will now discuss these ideas carefully to find a more intuitive formulation, but first we agree on some conventions. 

\begin{conv}\label{con:arrangement}
Let $M$ be a monoid object in a monoidal category $\fK$. We denote the $k$-tuple tensor product by 
\begin{align*}
M^{\ot k} = (\cdots (( M \ot M ) \ot M) \cdots ) \ot M, 
\end{align*}
where $M$ occurs $k$ times and the parenthesis are arranged to the left side, i.e.\ all the left parenthesis are grouped together. 
\end{conv}

Recall the basic idea of the formulation of the Hochschild cochain complex given in \cite[Definition~2.1]{hel-18}. The objects in the cochain complex here are given by 
\begin{align*}
C^k= 
\begin{cases}
	0                              &\text{for } k<0      \\
	\Hom_{\fK}(\one     ,X)        &\text{for } k=0      \\ 
	\Hom_{\fK}(M^{\ot k},X)        &\text{for } k\geq 1, 
\end{cases}
\end{align*}
for a monoidal category $\fK$, a monoid object $M$ in $\fK$ and a bimodule object $X$ over $M$. 

The differential, $d^k$, provides a morphism $d^{k}(f):M^{\ot(k+1)}\to X$, for every morphism $f:M^{\ot k} \to X$ in $\fK$. The differential is given by an alternating sum. In this sum, we will later distinguish between what is referred to as inner and outer summands.  

For the inner summands, we will apply the multiplication rule $\mu$ on a pair of adjacent objects in the tuple $M^{\ot(k+1)}$, before we apply $f$ on the ``remaining'' $M^{\ot k}$. Therefore, we need a procedure to isolate a pair of objects from the tuple $M^{\ot k}$ and a procedure to ``rearrange'' it back (when we have one isolated object). 

There are two outer summands. For the first one, we will isolate and apply $f$ on the last $k$ occurrences of $M$ in the tuple $M^{\ot(k+1)}$, then we apply the left action from the remaining monoid object $M$ on the bimodule object $X$. The second outer summand is dual to the first, we apply $f$ to the first $k$ occurrences of $M$ in the tuple $M^{\ot(k+1)}$, and then we apply the right action on $X$ from $M$. For the first of the outer summands, we need a procedure to isolate a single object at the beginning of a tuple. Such prosedure is not needed for the second outer summand, since a singe object is already isolated on the right in $M^{\ot (k+1)}$ (see Convention~\ref{con:arrangement}). 

As mentioned, a rather combinatorial, but nevertheless a general procedure to isolate a tuple with $i$ objects in a tuple with $k$ objects in position $j$ by the operation $\alpha^{i,j}_k$ is described in \cite[2.4]{hel-18}. In this paper, we describe a different procedure to isolate an adjacent pair of objects, which will be useful for the inner summands. First, we observe the following 
\begin{align*}
M^{\ot k}&=M^{\ot(k-1)}\ot M = (M^{\ot(k-2)}\ot M)\ot M = ((M^{\ot (k-3)}\ot M)\ot M)\ot M = \cdots = \\
&= (\cdots(M^{\ot 2}\ot M ) \ot M \cdots ) \ot M,
\end{align*} 
and we can apply $\mu$ to the isolated pair in the front of this tuple by applying 
\begin{align*}
M^{\ot k}=(\cdots(M^{\ot 2}\ot M ) \ot M \cdots ) \ot M 
\xrightarrow[]{%
(\cdots (\mu \ot 1_M ) \ot 1_M \cdots ) \ot 1_M } 
(\cdots(M \ot M ) \ot M \cdots ) \ot M = M^{\ot (k-1)}.
\end{align*}

To isolate a pair of adjacent objects at other places in the tuple, than the pair already isolated in the front, we use the associative relation as follows: 
\begin{align*}
M^{\ot k} 
\xrightarrow[]{%
(\cdots(\alpha_{M^{\ot i},M,M}\ot 1_M ) \ot 1_M \cdots )\ot 1_M}
(\cdots(M^{\ot i}\ot M^{\ot 2})\ot M \cdots )\ot M. 
\end{align*}
We apply $\mu$ to the isolated pair on the right side of the expression above 
\begin{align*}
(\cdots(M^{\ot i}\ot M^{\ot 2})\ot M \cdots )\ot M 
\xrightarrow[]{(\cdots ( 1_{M^{\ot i}} \ot \mu ) \ot 1_M \cdots ) \ot 1_M}
(\cdots(M^{\ot i}\ot M)\ot M \cdots )\ot M=M^{\ot (k-1)}.
\end{align*}
This procedure (in contrast to that in \cite{hel-18}) has the advantage of that we get $M^{\ot (k-1)}$ directly on the right hand side, and there is no need to ``rearrange'' the parentheses back again after applying $\mu$. 

For the first outer summand, we need to isolate one object in the beginning of a tuple. We do this by a composition of associators (tensored with the unit) of the form 
\begin{align*}
M^{\ot k} 
\xrightarrow[]{%
(\cdots(\alpha_{M,M,M} \ot 1_M )\ot 1_M \cdots )\ot 1_M}
(\cdots ( M \ot M^{\ot 2} ) \ot M \cdots )\ot M 
\xrightarrow[]{%
(\cdots ( \alpha_{M,M^{\ot 2},M})\ot 1_M )\ot 1_M \cdots )\ot 1_M} 
\\ 
(\cdots ( M \ot M^{\ot 3} ) \ot M \cdots )\ot M 
\xrightarrow[]{%
(\cdots ( \alpha_{M,M^{\ot 3},M})\ot 1_M )\ot 1_M \cdots )\ot 1_M} \cdots 
\\ 
\xrightarrow[]{%
\alpha_{M,M^{\ot (k-2)},M}} 
M \ot M^{\ot(k-1)}. 
\end{align*}
We denote this composition by 
\begin{align*}
\alpha^1_{M^{\ot k}} := \alpha_{M,M^{\ot (k-2)},M} \circ \cdots \circ (\cdots ( \alpha_{M,M^{\ot 2},M})\ot 1_M )\ot 1_M \cdots )\ot 1_M \circ (\cdots(\alpha_{M,M,M} \ot 1_M )\ot 1_M \cdots )\ot 1_M : \\ 
M^{\ot k} \to M \ot M^{\ot (k-1)}. 
\end{align*}

It should be noted that any procedure to get a specific reconfiguration of a tuple is equivalent, since natural coherence in monoidal categories (Remark~\ref{rem:coh}) provides that any diagram constructed to determine the relationship between such procedures commutes.  

Now, we can define the differential in the Hochschild cochain complex. 

\begin{defn}\label{def:hoch}
Let $(\fK,\ot,\one,\alpha,\lambda,\rho)$ be an $\Ab$-enriched monoidal category, let $(M,\mu,\eta)$ be a monoid object in $\fK$ and let $(X,\nu,\omega)$ be a bimodule object over $M$. We define the {\em Hochschild cochain complex} $(C^{\bullet},d^{\bullet})$ to have objects $C^k(M;X)$ as given above, i.e.\ 
\begin{align*}
C^k(M;X)=
\begin{cases}
0                              &\text{for } k<0      \\
\Hom_{\fK}(\one     ,X)        &\text{for } k=0      \\ 
\Hom_{\fK}(M^{\ot k},X)        &\text{for } k>0, 
\end{cases}
\end{align*}
and the differentials, 
\begin{align*}
d^k:C^{k}(M;X)\to C^{k+1}(M;X),
\end{align*}
are defined as:  
\begin{itemize}
\item For $k<0$: $d^k=0$. 
\item For $k=0$: Let $f\in C^0(M;X) = \Hom_{\fK}(\one,X)$. The differential, $d^0:\Hom_{\fK}(\one,X)\to \Hom_{\fK}(M,X)$, evaluated on $f$ is defined to be the sum of the compositions of:  
\begin{align*}
&M \xrightarrow[]{\lambda^{-1}_M} \one \ot M \xrightarrow[]{1_M \ot f} X \ot M \xrightarrow[]{\omega} X \\
&-M \xrightarrow[]{\rho^{-1}_M} M \ot \one \xrightarrow[]{f \ot 1_M} M \ot X \xrightarrow[]{\nu} X. 
\end{align*}
\item For $k>0$: We distinguish between the inner and outer summands, as discussed above. Denote the summands by $\chi_i$. Let $f\in C^k(M;X)=\Hom_{\fK}(M^{\ot k},X)$. The differential, $d^k:\Hom_{\fK}(M^{\ot k},X) \to \Hom_{\fK}(M^{\ot (k+1)},X)$, evaluated on $f$ is defined to be the alternating sum, $\sum_{i=0}^{k}(-1)^i\chi_i$, where the $\chi_i$'s are defined as follows: \\
For $i=0,$ we get the first outer summand $\chi_0$, which is defined to be the composition of 
\begin{align*}
M^{\ot (k+1)} \xrightarrow[]{\alpha^1_{M^{\ot (k+1)}}} M \ot M^{\ot k} \xrightarrow[]{1_M \ot f} M \ot X \xrightarrow[]{\nu} X. 
\end{align*} 
For $i=k$, we get the other outer summand $\chi_k$, which is defined to be the composition of 
\begin{align*}
M^{\ot (k+1)} = M^{\ot k} \ot M \xrightarrow[]{f \ot 1_M} X \ot M \xrightarrow[]{\omega} X. 
\end{align*}
For $0<i<k$, we get the inner summands $\chi_i$, which are defined to be the compositions of the form 
\begin{align*}
M^{\ot (k+1)} \xrightarrow[]{(\cdots ( \alpha_{M^{\ot i},M,M} \ot 1_M ) \ot 1_M \cdots ) \ot 1_M} (\cdots(M^{\ot i}\ot M^{\ot 2})\ot M \cdots )\ot M &\xrightarrow[]{(\cdots ( 1_{M^{\ot i}} \ot \mu ) \ot 1_M \cdots ) \ot 1_M}\\
&M^{\ot k} \xrightarrow[]{f} X. 
\end{align*}
\end{itemize}
\end{defn} 

The formulation of the Hochschild cochain complex 
is a coherently equivalent description to that in \cite[Definition~3.1]{hel-18}. Hence, \cite[Theorem~3.2]{hel-18} confirms that $(C^{\bullet}(M;X),d^{\bullet})$ is, in fact, a cochain complex, and its cohomology groups are well-defined.  

\begin{defn}\label{def:HH}
The {\em Hochschild cohomology groups} are defined to be the cohomology of the cochain complex $(C^{\bullet}(M;X),d^{\bullet})$ given in Definition~\ref{def:hoch}, i.e.\ 
\begin{align*}
\HH^{i}(M;X)=\ker(d^i)/\im(d^{i-1}). 
\end{align*}
\end{defn} 

Some of the classical results for the lower dimensional Hochschild cohomology groups are also proved in \cite{hel-18}. Here, $\HH^{0}(M;M)=\HH^{0}(M)$ is used to define some notion of ``quasi centre'' of $M$. Similarly, we can define the {\em quasi centre}, $Z(X)$, for a bimodule object $X$ to be:  
\begin{align*}
Z(X):=\HH^{0}(M;X)\cong\ker d^0. 
\end{align*}
%

With this formulation of Hochschild cohomology, it is proved that the Hochschild cohomology ring 
\begin{align*}
\HH^*(M)= \bigoplus_{i=0}^{\infty}\HH^{i}(M)
\end{align*}
(where $\HH^i(M):=\HH^i(M;M)$) is graded commutative with the cup product (see \cite[Theorem~5.5]{hel-18}). 

\section{Cosimplicial description of the Hochschild cochain complex}\label{sec:cosimphoch} 


The objective for this section is to formulate the Hochschild cochain complex of a monoid object $M$ in a monoidal category $\fK$ with values in a bimodule $X$ in terms of a cosimplicial object in $\Ab$. We argue that this formulation is equivalent to that which was given in the previous section (Section~\ref{sec:tuphoch}). In \cite[Section~9.1]{wei-88}, Hochschild cohomology for a $k$-algebra is introduced as a cosimplicial object in the vector space over $k$ ($k$ is a field).

\begin{defn}\label{def:delta}
Let $\mathbf{\Delta}$ denote category of non-empty finite ordinals and order preserving maps. The non-empty finite ordinals are denoted by 
\begin{align*}
[k]=\{0,1,\dots,k\} 
\end{align*}
with the ordering $0\leq1\leq\cdots\leq k$. This category is called the {\em simplex category} or the {\em $\mathbf{\Delta}$-category}. Let $\fK$ be an arbitrary category. A {\em cosimplicial object} $A$ in $\fK$ is a functor $A:\mathbf{\Delta} \to \fK$. We write $A([n])=A^n$.  
\end{defn}

We should first note that $A$, in the definition above, is often referred to as a {\em $\fK$-valued cosimplicial object}. It should also be noted that a {\em $\fK$-valued simplicial object} $B$ is a functor $B:\mathbf{\Delta}^{\op}\to\fK$. 

The category $\mathbf{\Delta}$ can be described several ways. Since we try to explore category theoretical properties of the Hochschild cochain complex, we should perhaps describe the $\mathbf{\Delta}$-category categorically. 

\begin{rem}\label{rem:delta}
The $\mathbf{\Delta}$-category can equivalently be defined as the category, where the objects are free categories on linear single arrowed and directed graphs/quivers and the morphisms are functors. 
\end{rem}
%
%

We recall the face and degeneracy maps in $\mathbf{\Delta}$ (see \cite[Section~8.1]{wei-88}). 

\begin{defn}\label{def:injsur}
For each $[k]\in\mathbf{\Delta}$, let $\epsilon_i:[k-1]\to[k]$ denote the ($i$-th) {\em face map}, that is the unique injective map that misses $i$: 
\begin{align*}
\epsilon_i(j)=
\begin{cases}
j   &\qquad \text{if $j<i$}\\
j+1 &\qquad \text{if $j\geq i$}.
\end{cases}
\end{align*}
Dually, for each $[k]\in\mathbf{\Delta}$, let $\zeta_i:[k+1]\to[k]$ denote the ($i$-th) {\em degeneracy map}, that is the unique surjective map that sends two elements to $i$:  
\begin{align*}
\zeta_i(j)=
\begin{cases}
j   &\qquad \text{if $j\leq i$}\\
j-1 &\qquad \text{if $j> i$}.
\end{cases}
\end{align*} 
\end{defn}
%
%

Face and degeneracy maps satisfies certain identities (see \cite[Exercise~8.1.1]{wei-88}). Let $A$ be a cosimplicial object in an arbitrary category $\fK$, then the face maps and degeneracy maps generalise to coface operations and codegeneracy operations on $A$, respectively. We recall \cite[Corollary~8.1.4]{wei-88}, which states how we can describe cosimplicial objects. 

\begin{prop}\label{prop:cosimp}
To describe a cosimplicial object $A$ in an arbitrary category $\fK$, it is sufficient 
%
and necessary to give a sequence of objects $A^0,A^1,\dots$ together with coface operations 
\begin{align*}
\delta^{i}: A^{k-1} \to A^{k} 
\end{align*}
and codegeneracy operations 
\begin{align*}
\sigma^{i}: A^{k+1} \to A^{k}, 
\end{align*}
such that the following identities are satisfied:  
\begin{align*}
\delta^j\delta^i&=\delta^i\delta^{j-1} \qquad\text{if $i<j$}, \\
\sigma^j\sigma^i&=\sigma^i\sigma^{j+1} \qquad\text{if $i\leq j$}, \\
\sigma^j\delta^i&=
\begin{cases}
\delta^i\sigma^{j-1} &\text{if $i<j$} \\
1                    &\text{if $i=j$ or $i=j+1$} \\
\sigma^{i-1}\delta^j &\text{if $i>j+1 $}. 
\end{cases}
\end{align*}\end{prop}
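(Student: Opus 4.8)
The plan is to recognize that Proposition~\ref{prop:cosimp} is the statement that $\mathbf{\Delta}$ is generated, as a category, by the face maps $\epsilon_i$ and degeneracy maps $\zeta_i$ subject precisely to the cosimplicial identities, and that these identities are therefore equivalent to the existence of a functor $A\colon\mathbf{\Delta}\to\fK$. Since the excerpt cites \cite[Corollary~8.1.4]{wei-88} for this, a self-contained proof would be expected to proceed in two directions. First I would establish the \emph{necessity}: given a cosimplicial object $A\colon\mathbf{\Delta}\to\fK$, set $\delta^i=A(\epsilon_i)$ and $\sigma^i=A(\zeta_i)$, and then the displayed identities follow by applying the functor $A$ to the corresponding identities among $\epsilon_i$ and $\zeta_i$ in $\mathbf{\Delta}$ itself (cited from \cite[Exercise~8.1.1]{wei-88}). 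This direction is essentially formal once the underlying simplicial identities are granted.

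The substantive direction is \emph{sufficiency}: given a sequence of objects $A^0,A^1,\dots$ and operations $\delta^i,\sigma^i$ satisfying the three families of identities, I would construct a functor $A\colon\mathbf{\Delta}\to\fK$ with $A([k])=A^k$. The key input is the \emph{epi-mono factorization} in $\mathbf{\Delta}$: every order-preserving map $\theta\colon[m]\to[n]$ factors uniquely as a surjection followed by an injection, and every injection is uniquely a composite $\epsilon_{i_s}\cdots\epsilon_{i_1}$ with $i_1<\cdots<i_s$ while every surjection is uniquely a composite $\zeta_{j_1}\cdots\zeta_{j_t}$ with $j_1<\cdots<j_t$ (this is the standard normal-form lemma for $\mathbf{\Delta}$, and may itself be quoted from \cite{wei-88}). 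I would then \emph{define} $A(\theta)$ by applying the corresponding composite of $\delta$'s and $\sigma$'s according to this normal form, define $A$ on objects in the obvious way, and check that $A$ preserves identities (the identity of $[k]$ has empty normal form) and composition. Functoriality on composites reduces, by the uniqueness of normal forms, to checking that rewriting an arbitrary composite of generators into normal form, using only the three given identity families as rewrite rules, does not change the resulting morphism in $\fK$; since the same rewriting within $\mathbf{\Delta}$ produces the normal form there, and the identities hold in $\fK$ by hypothesis, consistency follows.

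The main obstacle is precisely this last verification — showing that the presentation of $\mathbf{\Delta}$ by generators $\epsilon_i,\zeta_i$ and relations given by the cosimplicial identities is \emph{complete}, i.e.\ that any two words in the generators representing the same map in $\mathbf{\Delta}$ are connected by a sequence of applications of the listed identities. Concretely one argues that the identities suffice to move every occurrence of a $\delta$ to the left of every $\sigma$ (using the mixed relation $\sigma^j\delta^i$), then to sort the $\delta$-block into strictly increasing index order (using $\delta^j\delta^i=\delta^i\delta^{j-1}$) and the $\sigma$-block into strictly increasing order (using $\sigma^j\sigma^i=\sigma^i\sigma^{j+1}$), cancelling adjacent pairs via the $i=j$ or $i=j+1$ case; this yields a canonical form matching the epi-mono factorization, and a counting/termination argument shows the rewriting always terminates. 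Given the space constraints and the available citations, I would present the two directions with the necessity direction done by naturality and the sufficiency direction reduced to the normal-form and completeness facts about $\mathbf{\Delta}$, citing \cite[Section~8.1]{wei-88} for the latter rather than reproving them, and I would flag the completeness of the relations as the crux.
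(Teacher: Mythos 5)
Your proposal is correct: the necessity direction by applying the functor $A$ to the identities among the $\epsilon_i$ and $\zeta_i$ in $\mathbf{\Delta}$, and the sufficiency direction via the unique epi-mono/normal-form factorization in $\mathbf{\Delta}$ together with completeness of the presentation by the cosimplicial identities, is exactly the standard argument behind this statement. Note that the paper itself gives no proof at all --- it simply recalls the result from \cite[Corollary~8.1.4]{wei-88} --- so your reconstruction, including correctly identifying the completeness of the relations as the crux, is the proof of the cited source rather than an alternative to anything in the paper.
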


\begin{defn}\label{def:cosid}
We will refer to the identities given in Proposition~\ref{prop:cosimp} as the {\em cosimplicial identities}. 
\end{defn}

The objective now is to construct a cosimplicial object in $\Ab$, which we later will reformulate to the Hochschild cochain complex (in Theorem~\ref{def:comp1}).   

\begin{defn}\label{def:coshoch}
Let $(\fK,\ot,\one,\alpha,\lambda,\rho)$ be an $\Ab$-enriched monoidal category, $(M,\mu,\eta)$ a monoid object in $\fK$ and $(X,\nu,\omega)$ a bimodule object over $M$. The object $A$ in $\Ab$ is constructed in the following way. To each $[k]\in\mathbf{\Delta}$ we assign:  
\begin{align*}
[k] \mapsto 
\begin{cases}
A^0=\Hom_{\fK}(\one,X) &\text{for $k=0$}\\
A^k=\Hom_{\fK}(M^{\ot k},X) &\text{for $k>0$}. 
\end{cases}
\end{align*}
The coface operations are given by: For $k=0$, let $f\in A^0 = \Hom_{\fK}(\one,X)$. Then $\delta^i:A^0\to A^1$, for $i\in\{0,1\}$, evaluated on $f$, is defined as the composition of:   
\begin{align*}
\begin{cases}
\delta^0(f) = M \xrightarrow[]{\lambda^{-1}_M} \one \ot M \xrightarrow[]{ f \ot 1_M } X \ot M \xrightarrow[]{\omega} X \\
\delta^1(f) = M \xrightarrow[]{\rho^{-1}_M} M \ot \one \xrightarrow[]{ 1_M \ot f } M \ot X \xrightarrow[]{\nu} X.  
\end{cases}
\end{align*} 
For $k>0$, let $f\in A^k = \Hom_{\fK}(M^{\ot k},X)$. Then $\delta^i:A^k\to A^{k+1}$, for $i\in\{0,1,\dots,k\}$, evaluated on $f$, is defined as the composition of: 
\begin{align*}
\begin{cases}
\delta^0(f) = M^{\ot (k+1)} \xrightarrow[]{\alpha^1_{M^{\ot (k+1)}}} M \ot M^{\ot k} \xrightarrow[]{1_M \ot f} M \ot X \xrightarrow[]{\nu} X \\\\ 
\delta^i(f) = 
\begin{cases}
&M^{\ot (k+1)} \xrightarrow[]{(\cdots (\alpha_{M^{\ot i},M,M} \ot 1_M) \ot 1_M \cdots ) \ot 1_M} (\cdots(M^{\ot i}\ot M^{\ot 2})\ot M \cdots )\ot M \\
&\xrightarrow[]{(\cdots ( 1_{M^{\ot i}} \ot \mu ) \ot 1_M \cdots ) \ot 1_M}
M^{\ot k} \xrightarrow[]{f} X 
\end{cases}
\qquad\text{for $0<i<k$}\\\\
\delta^k(f) = M^{\ot (k+1)} = M^{\ot k} \ot M \xrightarrow[]{f \ot 1_M} X \ot M \xrightarrow[]{\omega} X.   
\end{cases}
\end{align*}
The codegeneracy operations are given by: For $k=0$, let $f\in A^1=\Hom_{\fK}(M,X)$. Then $\sigma^0: A^1 \to A^0$, evaluated on $f$, is defined as the composition of: 
\begin{align*}
\sigma^0(f) = \one \xrightarrow[]{\eta} M \xrightarrow[]{f} X. 
\end{align*}
For $k>0$, let $f\in A^{k+1}=\Hom_{\fK}(M^{\ot(k+1)},X)$. Then $\sigma^i:A^{k+1}\to A^k$, for $i\in \{0,1,\dots,k\}$, evaluated on $f$, is defined as the composition of:  
\begin{align*}
\begin{cases}
\sigma^0(f)= 
\begin{cases}
M^{\ot k} \xrightarrow[]{(\cdots((\lambda^{-1}_{M}\ot 1_M) \ot 1_M ) \cdots )\ot 1_M} (\cdots ((\one \ot M)\ot M )\ot M \cdots )\ot M \\
\xrightarrow[]{(\cdots(((\eta \ot 1_M)\ot 1_M )\ot 1_M ) \cdots )\ot 1_M} M ^{\ot (k+1)} \xrightarrow[]{f} X  \end{cases}\\\\ 
\sigma^i(f)=
\begin{cases}
M^{\ot k} 
\xrightarrow[]{%
(\cdots(1_{M^{\ot i}} \ot \lambda^{-1}_M)\ot1_M \cdots )\ot 1_M}
(\cdots(M^{\ot i} \ot (\one \ot M) )\ot M \cdots )\ot M \\
\xrightarrow[]{%
(\cdots(1_{M^{\ot i}}\ot(\eta \ot 1_M))\ot 1_M \cdots)\ot 1_M}
(\cdots(M^{\ot i} \ot (M \ot M) )\ot M \cdots )\ot M \\
\xrightarrow[]{%
(\cdots (\alpha_{M^{\ot i},M,M}^{-1}\ot 1_M \cdots )\ot 1_M}
M^{\ot(k+1)} \xrightarrow[]{f} X 
\end{cases}\qquad\text{for $0<i<k$}\\\\
\sigma^k(f) = 
\begin{cases}
M^{\ot k} = M^{\ot (k-1)}\ot M  
\xrightarrow[]{%
1_{M^{\ot(k-1)}}\ot \rho^{-1}_M} 
M^{\ot (k-1)}\ot (M\ot \one) \\
\xrightarrow[]{%
1_{M^{\ot (k-1)}}\ot (1_M \ot \eta)}
M^{\ot (k-1)}\ot (M\ot M)
\xrightarrow{%
\alpha^{-1}_{M^{\ot (k-1)}\ot (M\ot M)}}
M^{\ot(k+1)}
\xrightarrow[]{%
f}
X. 
\end{cases}
\end{cases}
\end{align*}
We refer to this construction, $(A,\delta,\sigma)$, as the {\em Hochschild cosimplicial object}. 
\end{defn}

In the definition of the codegeneracy maps for $0<i<k$ (in the definition of the Hochschild cosimplicial object above), we should observe that the inverse of the procedure to isolate a pair\footnote{This procedure to isolate a pair of objects in a tuple was described in Section~\ref{sec:tuphoch}. } is now used to rearrange the parenthesis back to $M^{\ot (k+1)}$. 

\begin{thm}\label{thm:coshoch}
The Hochschild cosimplicial object is, in fact, a cosimplicial object in $\Ab$. 
\end{thm}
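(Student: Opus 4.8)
The plan is to verify directly that the operations $\delta^i$ and $\sigma^i$ defined in Definition~\ref{def:coshoch} satisfy the cosimplicial identities listed in Proposition~\ref{prop:cosimp}; by that proposition this suffices to exhibit $(A,\delta,\sigma)$ as a genuine functor $\mathbf{\Delta}\to\Ab$. The key point that makes all of this tractable is Remark~\ref{rem:coh}: every morphism in sight is built by composing instances of $\mu$, $\eta$, $f$, the actions $\nu,\omega$, and reconfiguration morphisms assembled from $\alpha^{\pm1}$, $\lambda^{\pm1}$, $\rho^{\pm1}$ tensored with identities. Two such composites with the same source and target, whose only difference lies in how the tuples are reassociated/reunitized, automatically agree. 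So each identity reduces to checking that the ``essential'' operations ($\mu$, $\eta$, $f$, $\nu$, $\omega$) are applied in the same places on both sides, and that the associativity/unit axioms of $M$ and the bimodule axioms of $X$ account for the cases where an essential operation genuinely moves.

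First I would organise the verification by grouping the identities and by the position of the indices relative to the ``inner/outer'' trichotomy used in the definition. For $\delta^j\delta^i=\delta^i\delta^{j-1}$ with $i<j$: when both indices are inner ($0<i$, $j\le k$), each side multiplies two disjoint adjacent pairs of factors of $M^{\ot(k+2)}$ before applying $f$; since the pairs are disjoint the two multiplications commute, and the remaining bookkeeping of parentheses is handled by coherence. When one of the indices is $0$ or the top index, one side applies the left action $\nu$ (resp.\ right action $\omega$) and a multiplication on a disjoint pair — these commute by the bimodule compatibility diagram of Definition~\ref{def:bimod} together with the module-associativity diagrams; the case $i=0$, $j=k+1$ (outermost on both ends) is exactly the $N$-$M$-bimodule square. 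The case where $i$ and $j$ are adjacent so the two multiplications overlap on a common factor is precisely the associativity relation for $\mu$ (or the module-action associativity when an action is also involved). The identity $\sigma^j\sigma^i=\sigma^i\sigma^{j+1}$ for $i\le j$ is the dual statement, now using the footnote remark that $\sigma^i$ for inner $i$ undoes the pair-isolation procedure: the analysis is the same with $\eta$ in place of $\mu$ and the counit-style unit axioms of $M$ replacing associativity, and the outer $\sigma^0,\sigma^k$ involving $\lambda^{-1},\rho^{-1},\eta$ handled by the unit triangle and module unit diagrams.

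The mixed identities $\sigma^j\delta^i$ are where I expect the real work. The three subcases mirror the classical combinatorics: if $i<j$ the coface inserts a multiplication strictly to the left of where the codegeneracy inserts a unit, they act on disjoint regions and commute up to coherence, giving $\delta^i\sigma^{j-1}$; if $i>j+1$ symmetrically one gets $\sigma^{i-1}\delta^j$; and if $i=j$ or $i=j+1$ the composite must collapse to the identity. The last case is the crux: here $\delta^i$ splits one factor of $M$ into an adjacent pair and multiplies a pair, while $\sigma^j$ inserts $\eta$ and then contracts — so the two cancel via the unitarity relation $\mu\circ(\eta\ot 1_M)=\lambda$ and $\mu\circ(1_M\ot\eta)=\rho$ of Definition~\ref{def:monoid} (or the analogous module-unit diagrams when an action is involved at the ends), after which everything that remains is a composite of structural isomorphisms with their inverses and is the identity by coherence.

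The main obstacle is bookkeeping, not conceptual: one must be careful that the ``inner'' reconfiguration morphism used by $\delta^i$ and the one used by $\sigma^j$ are genuinely mutually inverse on the nose in the cancellation cases, and that in the disjoint-support cases the intermediate reparenthesizations line up so that coherence (Remark~\ref{rem:coh}) applies to a diagram with well-defined source and target. I would therefore set up, once and for all, a lemma that any two morphisms $M^{\ot a}\to M^{\ot b}$ obtained by composing $\alpha^{\pm1}$, $\lambda^{\pm1}$, $\rho^{\pm1}$ (tensored with identities) and a fixed set of ``essential'' arrows inserted at prescribed factor-positions are equal, and then each of the eight identity-checks becomes a short verification that the essential arrows occupy matching positions and that the genuinely overlapping cases are governed by the monoid/module/bimodule axioms already recorded above. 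Finally, since all operations are $\Z$-linear in $f$ (composition in $\fK$ is bilinear by the $\Ab$-enrichment hypothesis), the resulting functor lands in $\Ab$, completing the proof.
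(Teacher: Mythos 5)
Your proposal is correct and follows essentially the same route as the paper: one verifies the cosimplicial identities of Proposition~\ref{prop:cosimp} by diagram chases that combine natural coherence (Remark~\ref{rem:coh}), naturality of $\alpha,\lambda,\rho$, and the associativity/unitarity and (bi)module axioms, the paper simply carrying this out explicitly for the single identity $\delta^1\delta^0=\delta^0\delta^0$ and asserting the rest are analogous. Your case-by-case outline (and the observation that $\Ab$-enrichment makes the operations group homomorphisms) is just a more systematic rendering of the same argument.
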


\begin{proof}
This is a straightforward verification of the identities given in Proposition~\ref{prop:cosimp}. We will only show $\delta^1\delta^0=\delta^0\delta^0$ for
\begin{align*} 
\Hom_{\fK}(\one,X)\xrightarrow[]{}\Hom_{\fK}(M,X)\xrightarrow[]{}\Hom_{\fK}(M^{\ot 2},X). 
\end{align*}
We construct the following diagram, where $\delta^1\delta^0$
is on the left vertical (solid arrows) and $\delta^0\delta^0$
is on the right vertical (solid arrows). The dashed arrows are drawn to prove the claim. 
\begin{center}
\begin{tikzpicture}
\matrix(m)[matrix of math nodes,row sep=5em,column sep=5em,text height=1.5ex,text depth=0.25ex]
{ 
M    \ot M         &                      &              M \ot M   \\ 
M                  &                      &                        \\ 
\one \ot M         & \one \ot (M \ot M)   &   (\one \ot M) \ot M   \\ 
X    \ot M         & X    \ot (M \ot M)   &   (X    \ot M) \ot M   \\ 
                   &                      &    X           \ot M   \\ 
X                  &                      &    X                   \\ 
};
\path[ = ,font=\scriptsize](m-1-1) edge[-,double]         node[above]{$               $} (m-1-3);
\path[ = ,font=\scriptsize](m-6-1) edge[-,double]         node[above]{$               $} (m-6-3);
\draw[ ->,font=\scriptsize](m-1-1) edge         node[right]{$\mu $} (m-2-1);
\draw[ ->,font=\scriptsize](m-2-1) edge         node[right]{$\lambda^{-1}_M $} (m-3-1);
\draw[ ->,font=\scriptsize](m-3-1) edge         node[right]{$f \ot 1_M $} (m-4-1);
\draw[ ->,font=\scriptsize](m-4-1) edge         node[right]{$\omega $} (m-6-1);
\draw[ ->,font=\scriptsize](m-1-3) edge         node[right]{$\lambda^{-1}_M \ot 1_M $} (m-3-3);
\draw[ ->,font=\scriptsize](m-3-3) edge         node[right]{$(f \ot 1_M)\ot 1_M $} (m-4-3);
\draw[ ->,font=\scriptsize](m-4-3) edge         node[right]{$\omega \ot 1_M $} (m-5-3);
\draw[ ->,font=\scriptsize](m-5-3) edge         node[right]{$\omega $} (m-6-3);
\draw[ ->,font=\scriptsize](m-1-1) edge[dashed] node[right]{$\lambda^{-1}_{M\ot M} $} (m-3-2);
\draw[ ->,font=\scriptsize](m-1-3) edge[dashed] node[right]{$\lambda^{-1}_{M\ot M} $} (m-3-2);
\draw[ ->,font=\scriptsize](m-3-3) edge[dashed] node[above]{$\alpha_{\one,M,M} $} (m-3-2);
\draw[ ->,font=\scriptsize](m-3-2) edge[dashed] node[above]{$1_{\one}\ot\mu $} (m-3-1);
\draw[ ->,font=\scriptsize](m-3-2) edge[dashed] node[right]{$f \ot 1_{M \ot M} $} (m-4-2);
\draw[ ->,font=\scriptsize](m-4-3) edge[dashed] node[above]{$\alpha_{X,M,M} $} (m-4-2);
\draw[ ->,font=\scriptsize](m-4-2) edge[dashed] node[above]{$1_{X}\ot\mu $} (m-4-1);
%
\end{tikzpicture}
\end{center}
The triangular shaped square to the upper left commutes since $\lambda$ is a natural isomorphism. The middle triangle is the identity composed with $\lambda^{-1}_{M\ot M} $. The right upper triangle commutes by natural coherence in $\fK$ (see Remark~\ref{rem:coh}). The middle left square commutes by compositions with the identity, and we get $f \ot \mu$. The right middle square commutes since $\alpha$ is a natural isomorphism. The bottom part of the diagram commutes by using the right action property of $X$ ($X$ is a bimodule object of $M$). 

The other identities can be verified by a similar method by constructing the relevant diagram and use natural coherence or the other properties of monoid objects and bimodule objects discussed earlier in this paper.  
\end{proof}
%

We observe that the coface operations, $\delta^i$, in the Hochschild cosimplicial object $A$ in Definition~\ref{def:coshoch}, equal to the summands, $\chi_i$, we used to define the differentials, $d^i$, in the Hochschild cochain complex in Definition~\ref{def:hoch}. The cosimplicial object can then be turned into a cochain complex by adding $0$'s on the ``left side'' of the object, such that it is defined for negative $k$. This gives an equivalent formulation of Hochschild cohomology. 

\begin{thm}\label{def:comp1}
Let $(\fK,\ot,\one,\alpha,\lambda,\rho)$ be an $\Ab$-enriched monoidal category, $(M,\mu,\eta)$ a monoid object in $\fK$ and $(X,\nu,\omega)$ a bimodule object over $M$. Let $A$ be the Hochschild cosimplicial object defined in Definition~\ref{def:coshoch}. The following procedure turns the comsimplicial object $A$ into a cochain complex. For $k<0$ define $A^k=0$ and add these to the complex. The differential is defined to be the alternating sum of the coface operations for non-negative $k$ and $0$ otherwise, i.e.\ 
\begin{align*}
d^k=
\begin{cases}
0 &\text{if $k<0$}\\ 
\delta^0-\delta^1 &\text{if $k=0$} \\
\sum_{i=0}^{k+1}(-1)^i\delta^i&\text{if $k>0$}. 
\end{cases}
\end{align*}
Moreover, this cochain complex equals the Hochschild cochain complex defined in Definition~\ref{def:hoch}. 
\end{thm}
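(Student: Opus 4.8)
The plan is to prove both assertions of Theorem~\ref{def:comp1}: first, that the prescribed differential $d^\bullet$ indeed makes $(A^\bullet, d^\bullet)$ a cochain complex, and second, that this complex coincides on the nose with the Hochschild cochain complex of Definition~\ref{def:hoch}. The second part is the shortest: comparing Definition~\ref{def:coshoch} with Definition~\ref{def:hoch} term by term, the objects $A^k$ agree with $C^k(M;X)$ for all $k$ (with the convention $A^k = C^k = 0$ for $k<0$), and the coface operation $\delta^i : A^k \to A^{k+1}$ is literally the same composite of morphisms in $\fK$ as the summand $\chi_i$ appearing in $d^k$ of Definition~\ref{def:hoch}: $\delta^0$ is the first outer summand $\chi_0$ built from $\alpha^1_{M^{\ot(k+1)}}$, $1_M\ot f$ and $\nu$; $\delta^k$ is the second outer summand $\chi_k$ built from $f\ot 1_M$ and $\omega$; and for $0<i<k$ the $\delta^i$ are exactly the inner summands built from $(\cdots(\alpha_{M^{\ot i},M,M}\ot 1_M)\ot 1_M\cdots)\ot 1_M$ followed by $(\cdots(1_{M^{\ot i}}\ot\mu)\ot 1_M\cdots)\ot 1_M$ and then $f$. (The $k=0$ case matches the two-term formula $\delta^0 - \delta^1$ against the $d^0$ of Definition~\ref{def:hoch}, noting $\delta^0(f)$ uses $f\ot 1_M$ and $\omega$ while $\delta^1(f)$ uses $1_M\ot f$ and $\nu$, so $d^0 = \delta^0 - \delta^1$ reproduces exactly the displayed sum there.) Hence the alternating sums $\sum_{i=0}^{k+1}(-1)^i\delta^i$ agree with $\sum_{i=0}^{k+1}(-1)^i\chi_i = d^k$ as defined previously, so the two complexes are identical.

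For the first part — that $d^{k+1}\circ d^k = 0$ — I would invoke the general fact that any cosimplicial object in an additive (or $\Ab$-enriched with the relevant zero object) category, equipped with the alternating sum of its coface operations as differential, automatically forms a cochain complex. Concretely, writing $d = \sum_i (-1)^i\delta^i$, one expands $d\circ d$ and uses the first cosimplicial identity $\delta^j\delta^i = \delta^i\delta^{j-1}$ for $i<j$ (Proposition~\ref{prop:cosimp}) to pair up terms with opposite signs, which cancel in the usual simplicial fashion; the signs work out because the grading offset is $1$ rather than $0$, exactly as in the Moore complex of a cosimplicial abelian group. Since Theorem~\ref{thm:coshoch} already establishes that $(A,\delta,\sigma)$ satisfies all the cosimplicial identities, in particular $\delta^j\delta^i=\delta^i\delta^{j-1}$, this cancellation applies verbatim, and $d^{k+1}d^k=0$ follows. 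Alternatively, and with even less work, one may simply note that the theorem asserts this complex \emph{equals} the Hochschild cochain complex of Definition~\ref{def:hoch}, which by the remark following that definition is already known to be a cochain complex via \cite[Theorem~3.2]{hel-18}; so once the identification of the two complexes is in hand, there is nothing further to check.

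The main obstacle, such as it is, lies entirely in the first approach: one must be careful that the coboundary of a cosimplicial object really is built from the alternating sum over $i=0,\dots,n+1$ of $\delta^i$ on the $n$-th term (so that, e.g., the formula $\sum_{i=0}^{k+1}(-1)^i\delta^i$ for $d^k$ on $A^k$ uses $\delta^i : A^k\to A^{k+1}$ with $i$ ranging up to $k+1$), and that the cancellation in $d\circ d$ matches the index ranges correctly. This is a purely formal bookkeeping matter identical to the classical computation in, say, \cite[Section~8.2]{wei-88}, and requires no new input beyond the cosimplicial identities of Proposition~\ref{prop:cosimp}, which Theorem~\ref{thm:coshoch} has already supplied. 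Accordingly I would present the argument as: (i) observe $A^k = C^k(M;X)$ and $\delta^i = \chi_i$ by direct comparison of Definitions~\ref{def:coshoch} and \ref{def:hoch}; (ii) conclude $d^k$ here equals $d^k$ there; (iii) invoke \cite[Theorem~3.2]{hel-18} (or the standard cosimplicial cancellation using Theorem~\ref{thm:coshoch}) for $d^{k+1}d^k=0$.
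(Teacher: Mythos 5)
Your proposal is correct and follows essentially the same route as the paper, whose justification for Theorem~\ref{def:comp1} is precisely the observation preceding it: the coface operations $\delta^i$ coincide termwise with the summands $\chi_i$ of Definition~\ref{def:hoch}, the complex is padded with zeros in negative degrees, and $d^{k+1}\circ d^k=0$ is inherited from \cite[Theorem~3.2]{hel-18}. Your alternative derivation of $d\circ d=0$ from the cosimplicial identities of Theorem~\ref{thm:coshoch} is a standard and legitimate supplement (and your caution about the index range of the $\delta^i$ versus the alternating sum is well placed, since the summation bounds in Definition~\ref{def:hoch} and Theorem~\ref{def:comp1} must be read consistently), but it does not change the substance of the argument.
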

%
%
%
%
%
%
%
%
%
%
%
%
%
%
%
%
%
%
%
%
%
%
\mbox{}\\
\noindent\textbf{Acknowledgements.} I would like to thank the organisers of ISCRA, Professor Javad Asadollahi and his crew for a wonderful convention in Isfahan, Iran, April 2019. It was a true pleasure and I am grateful to be given the opportunity to speak at the conference. I thank Professor Aslak Bakke Buan and my former employer the Department of Mathematical Sciences at NTNU for providing funding in this occasion. I am further thankful for the invitation to submit this work to the Bulletin of the Iranian Mathematical Society. I thank the two anonymous reviewers for valuable comments and suggestions, and Professor Majid Soleimani-damaneh, Editor in Chief, Bulletin of the Iranian Mathematical Society, for the correspondence. Finally, I thank Eirik Hellstrøm Finnsen for his time and competence concerning this article. 

%
%
%
%
%
%
%
%
%
\bibliography{bibFile}{} 
\bibliographystyle{ieeetr}

\end{document}